\title{On the energy equality for solutions to Newtonian and non-Newtonian fluids}
\author{Hugo Beir\~{a}o da Veiga$^{1,}$ \footnote{Partially supported  by FCT (Portugal) under grant UID/MAT/04561/3013.}\qquad Jiaqi Yang$^{2,}$\footnote{Hugo Beir\~{a}o da Veiga (\texttt{bveiga@dma.unipi.it}) and Jiaqi Yang (\texttt{yjq@imech.ac.cn})}}
\date{
\small $^1$ Department of Mathematics, Pisa University, Pisa, Italy\\
\small $^2$ Key Laboratory for Mechanics in Fluid Solid Coupling Systems, Institute of Mechanics, Chinese Academy of Sciences, Beijing 100190, China}
\newtheorem{theorem}{Theorem}[section]
\theoremstyle{remark}
\newtheorem{remark}{Remark}[section]
\theoremstyle{definition}
\numberwithin{equation}{section}
\newcommand{\p}{\partial}
\newcommand{\e}{\epsilon}
\newcommand{\R}{\mathbb{R}}
\newcommand{\f}{\frac}
\newcommand{\n}{\nabla}
\newcommand{\tha}{\theta}
\newcommand{\ed}{\end{document}}
\newcommand{\na}{{\nabla}}
\newcommand{\pa}{{\partial}}
\newcommand{\Om}{{\Omega}}
\begin{document}
\maketitle
\begin{abstract}
We are concerned with the energy equality for weak solutions to Newtonian and non-Newtonian incompressible fluids. In particular, the results obtained for non-Newtonian fluids, after restriction to the Newtonian case, equal or improve the known results. Furthermore, a new result below allows an interpretation of the classical Shinbrot condition, coherent to the more recent results in the literature.
\end{abstract}

\noindent \textbf{Mathematics Subject Classification:} 35Q30, 76A05, 76D03.

\vspace{0.2cm}
\noindent \textbf{Keywords:} Energy equality; Newtonian and non-Newtonian fluids.

\vspace{0.2cm}
\section{Introduction. The Problem.}
We are interested on the energy equality for weak solutions to Newtonian ($r=2$) and non-Newtonian ($r\neq2$) incompressible fluids
\begin{equation}\label{eq:non-Newtonian system}
\begin{cases}
u_t+u\cdot\n u-\text{div}\left(|D(u)|^{r-2}D(u)\right)+\n p=0, &\text{in $\Omega\times(0,T)$},\\
\n\cdot\, u=0, &\text{in $\Omega\times(0,T)$},\\
u=0,&\text{in $\p\Omega\times(0,T)$},\\
u(\cdot,0)=u_0, &\text{in $\Omega$},
\end{cases}
\end{equation}
where
\[
D(u)=\f{1}{2}\left(\n u+(\n u)^T\right)
\]
is the stress tensor, and $\Omega\subset\R^3$ is a bounded domain, with smooth boundary $\p\Omega$.\par%

\vspace{0.2cm}

Our main results are the Theorems \ref{thm:BC} and \ref{thm:BV} below. However, before stating these two results, we present an overview on some related, previous results.  This is the aim of the next two sections. In particular, in section \ref{The Berselli-Chiodaroli Results.}, we recall Berselli and Chiodaroli's Theorem \ref{thm:BC}, in reference
\cite{BC}. This reference gave us motivation, and lines of reasoning, leading to the present extension. See also the
fourth chapter of G.P. Galdi's work \cite{galdi-2000}. This last reference also furnishes an outstanding treatment of the more important classical topics on the Navier-Stokes equations.\par%
It seems suitable to inform the interested readers that, as far as we know from the authors, reference \cite{BC} is not the final version.
\section{On some Newtonian Classical Results. The Shinbrot SH-measure.}
In this section we consider the Newtonian case, $r=2$, which can be written in the equivalent, more usual form,
\begin{equation}\label{eq:NS}
\begin{cases}
\pa_t u+\,u\cdot\nabla\,u-\Delta u+\nabla p=0\,,\quad &\textrm{in $\Omega\times(0,T)$}\,,\\
\nabla \cdot\,u=0\,,\quad& \textrm{in $\Omega\times(0,T)$}\,,\\
u=0\,,\quad& \textrm{on $\pa\Omega\times(0,T)$}\,,\\
u(\cdot,\,0)=\,u_0 \quad &\textrm{in $\Omega$}\,,
\end{cases}
\end{equation}

\vspace{0.2cm}

In reference \cite{shinbrot} M.Shinbrot shows that if a weak solution $\,u\,$ to the Navier-Stokes equations \eqref{eq:NS} satisfies
\begin{equation}\label{shin}
u \in\,L^p (0,\,T; L^q(\Omega))\,,
\end{equation}
where
$$
\frac 2p + \frac 2q =\,1\,,\quad  \textrm{and} \quad q\geq\,4,
$$
then it satisfies the energy equality. This result is a generalization of previous results due to G.Prodi \cite{Prodi}  and J.L.Lions \cite{lions}, where these authors proved the above result for $p=\,q=\,4\,.$%

\vspace{0.2cm}
Shinbrot condition, denoted in the sequel by SH condition, may furnish a possible measure of the strength of sufficient conditions, of integral type, to guarantee the energy equality. In a rough sense, it is a (more questionable) counterpart of the well known Prodi-Serrin's condition in the regularity theory for solutions to the Navier-Stokes equations. Note that Shinbrot's result does not depend on the space dimension. This is quite natural, if one takes into account that the main rule to prove the result is played by the time and not by the space.\par%
If the SH measure \eqref{shonas} below is significant as a measure of strength, we expect that results obtained under assumption \eqref{shin}, by essentially appealing to the same proof (even if for a set of distinct values of the parameters $p$ and $q$), must enjoy the same value for the quantity
\begin{equation}\label{shonas}
\theta(p,\,q) \equiv\,\frac 2p + \frac 2q\,,
\end{equation}
independently of the particular values of $p$ and $q$. Exactly as happens to the result proved by Shinbrot.%

\vspace{0.2cm}

After the above 1974 Shinbrot's paper \cite{shinbrot} new results, enjoying a better SH measure, appeared, see \cite{BC} for references.  Application of the SH condition to these stronger results have brought to light a negative feature of the SH condition. It appears that it strongly depends on the particular values of the parameters $p,\,q\,.$ In particular the SH condition presents a peak for some particular values of the parameters. This would show that SH does not give a correct measure of the strength. This happens, in particular, to the quite strong results obtained by L.C. Berselli and E. Chiodaroli in reference \cite{BC}. One of our aim is to solve, positively, this anomaly. This study leads to the Theorem \ref{thm:BV} below.\par%
 The other main aim is to extend the Berselli-Chiodarolli results to non-Newtonian fluids. This leads to the Theorem \ref{thm:BC} below. Actually, in this theorem, we also improve some of their results.\par%
Before going inside our main results, we recall the Berselli-Chiodarolli results.
\section{The Berselli-Chiodaroli Results.}\label{The Berselli-Chiodaroli Results.}
In the recent paper \cite{BC} L.C. Berselli and E. Chiodaroli gave a very important contribution to
the study of the energy equality to solutions of the Navier-Stokes equations
by improving, in a quite substantial way, several previous known results. They essentially proved
new sufficient conditions for the energy equality involving integrability conditions on the gradient of the velocity.
Note that the SH condition allows comparison between strengths of sufficient conditions depending on integrability assumptions on the velocity, with strengths of similar conditions on the gradient of the velocity, by appealing to well known Sobolev's embedding theorems, to transform integrability exponents of $\na u$ into "equivalent" integrability exponents for $u$.

\vspace{0.2cm}

The main result in BC is the following. For covenience we replace the (i)-notation used in this last reference by (j)-notation:
\begin{theorem}\label{bers-chio}
Let $u_0 \in H $ be a Leray-Hopf weak solution of the Navier-Stokes equations \eqref{eq:NS}, and assume that $\,\na u \in\, L^p(0,\,T; L^q(\Om))\,$ for the following ranges of the exponents $p,\,q$:
\begin{equation}\label{pqcases}
\begin{cases}
(j)\quad \frac32 <q< \frac95 \,, \quad \textrm{and}\quad \na u \in\, L^{\frac{q}{2q-\,3}}(0,\,T; L^q(\Om))\,\,;\\
(jj)\quad \frac95 \leq q<3 \,, \quad \textrm{and}\quad \na u \in\, L^{\frac{5q}{5q-\,6}}(0,\,T; L^q(\Om))\,\,;\\
(jjj)\quad 3\leq q \,, \quad \textrm{and}\quad \na u \in\, L^{1+\,\frac{2}{q}}(0,\,T; L^q(\Om))\,\,.\\
\end{cases}
\end{equation}
Then $u$ satisfies the energy equality
\begin{equation}\label{eneq}
\| u(t)\|_2^2 \,+\, 2\,\int_{0}^{t} \,\|\n u(\tau)\|_2^2 \,d\tau=\,\| u_0\|_2^2\,.
\end{equation}
Hereafter, we define $H$ by the completion of $\mathcal{V}$ in $L^2(\Omega)$, where $\mathcal{V}$ is the space of smooth divergence free vectors with compact support in $\Omega$.
\end{theorem}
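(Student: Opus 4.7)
The plan is to carry out the classical Lions--Shinbrot mollification-and-testing argument, adapted to the gradient integrability hypothesis. Let $\rho_\epsilon$ be a smooth time mollifier, and set $u^\epsilon=\rho_\epsilon\ast_t u$ after extending $u$ suitably outside $(0,T)$. Since $u\in L^2(0,T;H^1_0)\cap L^\infty(0,T;L^2)$, the function $u^\epsilon$ is smooth in time, solenoidal, and still lies in $H^1_0$ spatially, hence is admissible as a test function in the weak formulation of \eqref{eq:NS}. Testing on $(0,t)$ and symmetrising the distributional time derivative (passing one copy of $\rho_\epsilon$ back onto $u$) yields the approximate identity
\begin{equation*}
\tfrac12\|u^\epsilon(t)\|_2^2 \,+\, \int_0^t(\nabla u,\nabla u^\epsilon)\,d\tau \,+\, \int_0^t((u\cdot\nabla)u,u^\epsilon)\,d\tau \,=\, \tfrac12\|u_0^\epsilon\|_2^2 .
\end{equation*}
The linear terms pass to the limit as $\epsilon\to 0$ by standard properties of mollifiers, so the whole argument reduces to showing that the nonlinear term converges to its formal value $\int_0^t((u\cdot\nabla)u,u)\,d\tau=0$.

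Integrating by parts and using $\nabla\cdot u=0$, the nonlinear term equals $-\int_0^t\int_\Omega(u\otimes u):\nabla u^\epsilon\,dx\,d\tau$. By hypothesis $\nabla u^\epsilon\to\nabla u$ strongly in $L^p(0,T;L^q(\Omega))$, so H\"older in space--time reduces the entire matter to the embedding
\begin{equation*}
u\otimes u \,\in\, L^{p'}(0,T;L^{q'}(\Omega)), \quad \textrm{i.e.,}\quad u\,\in\, L^{2p'}(0,T;L^{2q'}(\Omega)) .
\end{equation*}
The available a priori information on $u$ is twofold: the Leray--Hopf bound $u\in L^\infty(0,T;L^2)\cap L^2(0,T;L^6)$, which by interpolation yields $u\in L^a(0,T;L^b)$ for every pair $(a,b)$ on the parabolic line $\frac{2}{a}+\frac{3}{b}=\frac{3}{2}$, $b\in[2,6]$; and the Sobolev enhancement of the hypothesis, namely $u\in L^p(0,T;L^{q^\ast})$ with $q^\ast=3q/(3-q)$ if $q<3$ and $q^\ast$ arbitrarily large if $q\geq 3$.

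It remains to show, case by case, that these ingredients interpolate to reach $L^{2p'}(L^{2q'})$. In case (jjj), the fact that $q^\ast$ can be taken arbitrarily large allows a direct interpolation of $L^p(L^{q^\ast})$ with the end-point $L^\infty(L^2)$; solving the resulting scaling relations produces the Shinbrot-type exponent $p=1+2/q$. Case (jj) is formally similar: the Sobolev exponent $q^\ast\in[9/2,\infty)$ still lies above the spatial target $2q'\in(3,9/2]$, and the same interpolation with $L^\infty(L^2)$ yields $p=5q/(5q-6)$. The main obstacle is case (j), where the Sobolev exponent $q^\ast\in(3,9/2)$ is strictly below the spatial target $2q'\in(9/2,6)$, so that interpolating down to $L^2$ is useless. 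Instead, one must interpolate $L^p(L^{q^\ast})$ with the $L^2(L^6)$ piece of Leray--Hopf (i.e.\ the opposite end of the parabolic line); a careful balancing of the weights then produces precisely the strongest time exponent $p=q/(2q-3)$, and closes the argument.
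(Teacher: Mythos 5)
Your strategy coincides with the one underlying the paper: the statement itself is quoted from Berselli--Chiodaroli, but the paper reproves (and improves) it as the $r=2$ case of Theorem \ref{thm:BC}, by mollifying in time, reducing everything to the trilinear term, and closing via the condition $u\in L^{2p'}(0,T;L^{2q'})$ --- exactly your target space, since the paper's $\tilde q$ equals $2q'$ and its H\"older-in-time exponents amount to $2p'$. Your interpolation bookkeeping in cases (j) and (jj) is correct and is literally the paper's Gagliardo--Nirenberg step rewritten as Lebesgue interpolation after Sobolev embedding: for $q<9/5$ one has $q^*<2q'<6$, so one must interpolate $L^p(L^{q^*})$ against the $L^2(L^6)$ end of Leray--Hopf, recovering $p=q/(2q-3)$; for $q\geq 9/5$ one has $q^*\geq 2q'\geq 2$ and interpolation against $L^\infty(L^2)$ gives $p=5q/(5q-6)$.

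The one genuine gap is the endpoint $q=3$ of case (jjj), which the theorem includes. There $W^{1,3}\not\hookrightarrow L^\infty$, so ``$q^*$ arbitrarily large'' means arbitrarily large but \emph{finite}; the spatial interpolation $\frac{1}{2q'}=\frac{\theta}{q^*}+\frac{1-\theta}{2}$ then forces $\theta=\frac{1/q}{1-2/q^*}>\frac1q$, and the time--H\"older requirement $2p'\theta\leq p$ reads $\frac{(q+2)/q}{1-2/q^*}\leq\frac{q+2}{q}$, which fails for every finite $q^*$. Your sketch therefore loses an epsilon precisely at $q=3$ and does not close there. The fix is to abandon the Sobolev step and interpolate directly against $L^\infty(L^2)$ as in your case (jj): the inequality $\|u\|_{2q'}\leq\|u\|_2^{\theta}\|\nabla u\|_q^{1-\theta}$ with $\theta=\frac{5q-9}{5q-6}$ is valid for every $q>\frac95$ with no upper bound, and yields $p=\frac{5q}{5q-6}$, which equals $1+\frac2q$ at $q=3$ and is strictly smaller for $q>3$ --- this is exactly the unified treatment and the improvement of case (jjj) recorded in Remark \ref{rem:G-N}. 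A secondary, standard point: once $u\otimes u\in L^{p'}(L^{q'})$ you still need $\int_\Omega(u\otimes u):\nabla u\,dx=0$ for a.e.\ $t$, which requires approximating $u$ by smooth divergence-free fields in $W^{1,q}\cap L^{2q'}$; this density argument is what the paper's auxiliary sequence $u_m$ carries out, and it should at least be mentioned.
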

In reference \cite{BC} other main results are proved. In particular, in Section 1.2, "Energy conservation and the Onsager conjecture", the authors proved that if
$$
u \in  L^{\frac{13}{9}}(0,\,T; C^{0,\,\frac13}(\Om))
$$
then $u$ verifies the energy equality. This result substantially improves the previous main results known in the literature.\par%
To end this section we note that, at least in some cases, belonging to the Leray-Hopf class may be simply replaced by belonging to the space $\,L^2_{loc}(\R^3 \times (0,\,T)\,)\,,$ in the distributional sense. See \cite{galdi-dist}.\par%
\section{On the Non-Newtonian Case.}
In reference \cite{Yang} J.Yang succeed in extending Shinbrot's result to solutions of the non-Newtonian system \eqref{eq:non-Newtonian system}. On the other hand, as still referred, in reference \cite{BC} the authors improved Shinbrot's results. Hence, by putting together both results, one immediately considers the possibility of extending the results proved in reference \cite{BC} to the non-Newtonian fluids. This aim is reached in Theorem \ref{thm:BC} below where, in particular, the result concerning the case $\,q>3\,$ is improved.\par%
To guarantee the existence of weak solutions to system \eqref{eq:non-Newtonian system} we assume everywhere that $\,r>\,\f85\,.$ See \cite{Yang}, Remark 1 (in fact this restriction is included, case by case, in all the distinct restrictions imposed in the sequel).%
\begin{remark}\label{rem:rego}
It is worth noting that for $\,r\geq\,\frac{11}{5}\,$ weak solutions are strong, see \cite{Ladyzhenskaya,malek,malek1}. This leads to the energy equality. See also the Remark 2 in \cite{Yang}, and comments. However, by taking advantage of the treatment of the other cases considered below, a direct proof of the above claim follows without too much additional material. So, for completeness, and for the reader's convenience, we do not restrict our proofs to the case $\,r<\,\frac{11}{5}\,$.
\end{remark}
\section{Main Results}
We start from the extension of Berselli and Chiodarolli's results to the non-Newtonian case.
\begin{theorem}\label{thm:BC}
Let $u_0\in H$ and let $u$ be a Leray-Hopf weak solution of \eqref{eq:non-Newtonian system} in a smooth bounded domain $\Omega$. Let us assume that $\n u\in L^p(0,T;L^q(\Omega))$ for the following ranges of the exponents $p\,,q$:
\begin{description}
  \item[(i)$_1$] $\f{9}{5}< r\leq2$, $\f{9-3r}{2}<q\leq\f95$ and $p=\f{q(5r-9)}{3r+2q-9}$, i.e. $\n u\in L^{\f{q(5r-9)}{3r+2q-9}}(0,T;L^q(\Omega))$;
  \item[(i)$_2$] $2<r<3$, $\f{3r}{5r-6}\leq q\leq\f95$ and $p=\f{q(5r-9)}{3r+2q-9}$, i.e. $\n u\in L^{\f{q(5r-9)}{3r+2q-9}}(0,T;L^q(\Omega))$;
  \item[(i)$_3$] $r\geq3$, $1< q\leq\f95$ and $p=\f{q(5r-9)}{3r+2q-9}$, i.e. $\n u\in L^{\f{q(5r-9)}{3r+2q-9}}(0,T;L^q(\Omega))$;
  \item[(ii)] $q>\f95\,$ and $p=\f{5q}{5q-6}$, i.e. $\n u\in L^{\f{5q}{5q-6}}(0,T;L^q(\Omega))$\,.
\end{description}
Then $u$ satisfies the energy equality
\begin{equation*}
\|u(t)\|_2^2+2\int_0^t\|D(u)(\tau)\|^r_rd\tau=\|u_0\|_2^2\,.
\end{equation*}
\end{theorem}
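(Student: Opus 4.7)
The approach is the classical test-function method via temporal mollification, extending the Berselli--Chiodaroli framework to the non-Newtonian setting. Let $\phi_\ep(t)$ be a standard nonnegative even mollifier and set $u_\ep(x,t):=(\phi_\ep\ast_t u)(x,t)$. Since $u$ is Leray-Hopf, $u_\ep$ is smooth in $t$, divergence-free, and inherits the spatial regularity of $u$; in particular it is admissible as a test function in the weak formulation of \eqref{eq:non-Newtonian system}. Mollifying the equation in time and testing with $u_\ep$ yields
\[
\tfrac12\|u_\ep(t)\|_2^2-\tfrac12\|u_\ep(s)\|_2^2+\int_s^t\!\!\int_\Om \bigl(|D(u)|^{r-2}D(u)\bigr)_\ep:D(u_\ep)\,dx\,d\ta+\int_s^t\!\!\int_\Om (u\otimes u)_\ep:\n u_\ep\,dx\,d\ta=0,
\]
valid for $0<s<t<T$ with $s,\,T-t>\ep$.

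\textbf{Passage to the limit and identification of the hard term.} As $\ep\to0$, the boundary energies converge to $\tfrac12\|u(t)\|_2^2-\tfrac12\|u_0\|_2^2$, using Leray-Hopf weak continuity at $t=0$. The viscous term converges to $\int_0^t\!\!\int_\Om|D(u)|^r\,dx\,d\ta$ because $|D(u)|^{r-2}D(u)\in L^{r/(r-1)}(0,T;L^{r/(r-1)}(\Om))$ is fixed and $D(u_\ep)\to D(u)$ strongly in $L^r$. Everything hinges on
\[
I_\ep:=\int_0^t\!\!\int_\Om (u\otimes u)_\ep:\n u_\ep\,dx\,d\ta \longrightarrow 0.
\]
Splitting $I_\ep=\int\bigl[(u\otimes u)_\ep-u\otimes u\bigr]:\n u_\ep+\int(u\otimes u):\n u_\ep$, the first term is dominated by $\|(u\otimes u)_\ep-u\otimes u\|_{L^{p'}(L^{q'})}\,\|\n u_\ep\|_{L^p(L^q)}$, which vanishes provided $u\otimes u\in L^{p'}(0,T;L^{q'}(\Om))$; the second converges to $\int(u\otimes u):\n u=0$ by $\n\cdot u=0$. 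It thus suffices to establish $u\in L^{2p'}(0,T;L^{2q'}(\Om))$ with $(p',q')$ conjugate to $(p,q)$.

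\textbf{Verification of the exponents per case.} In case (ii), the Sobolev embedding $W^{1,q}(\Om)\hookrightarrow L^{3q/(3-q)}(\Om)$ (replaced by $L^\infty$ when $q>3$) lifts the hypothesis $\n u\in L^p(L^q)$ to $u\in L^p(L^{3q/(3-q)})$; interpolating this with $u\in L^\infty(L^2)$ produces $u\in L^{2p'}(L^{2q'})$ exactly when $p=5q/(5q-6)$, and a direct calculation shows the balance does not involve $r$, explaining the $r$-free form of (ii). In cases (i)$_1$--(i)$_3$, where $q$ is smaller, one additionally uses the Leray-Hopf dissipation $u\in L^r(0,T;W^{1,r}(\Om))\hookrightarrow L^r(0,T;L^{3r/(3-r)}(\Om))$ (with the standard modification when $r\geq 3$) and interpolates among three spaces; the scaling-critical condition producing $L^{2p'}(L^{2q'})$ is precisely $p=q(5r-9)/(3r+2q-9)$, while the $q$-thresholds $(9-3r)/2$, $3r/(5r-6)$, and $1$ in (i)$_1$, (i)$_2$, (i)$_3$ correspond to the regimes where the Sobolev exponent $3r/(3-r)$ is strong enough for the interpolation weights to remain in $[0,1]$.

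\textbf{Main obstacle.} The substantive difficulty is the three-space interpolation in case (i): matching the output exponents $(2p',2q')$ against the triple $L^\infty(L^2)$, $L^r(L^{3r/(3-r)})$, $L^p(L^{3q/(3-q)})$, and verifying that the formula $p=q(5r-9)/(3r+2q-9)$ is exactly critical. This is algebraically delicate but conceptually routine. The genuinely non-Newtonian aspect---the $r$-power viscous term---causes no extra trouble here: since $u$ is fixed throughout, no monotonicity or Minty argument is needed, and the mollified stress converges by strong $L^r$ convergence of $D(u_\ep)$.
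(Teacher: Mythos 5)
Your overall strategy is the same as the paper's: mollify in time, identify the convective term as the only obstruction, and reduce its vanishing to the integrability condition $u\in L^{2p'}(0,T;L^{2q'}(\Omega))$ (in the paper's notation, $u\in L^{2p'}(0,T;L^{\tilde q})$ with $\tilde q=2q'$), verified by Sobolev embedding plus interpolation. However, two of the steps you assert are precisely where the paper's actual work lies, and as written they are gaps. First, you dispose of $\int(u\otimes u):\nabla u_\ep \to \int(u\otimes u):\nabla u=0$ ``by $\nabla\cdot u=0$.'' For a merely weak $u$ this identity is not free: it holds for smooth divergence-free fields by integration by parts and must be transferred to $u$ by an approximation argument. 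The paper does this by introducing a second approximating sequence $u_m\to u$ in $L^r(0,T;V_r)\cap L^p(0,T;W^{1,q})$, starting from $\int(u\cdot\nabla u_m,u_m)\,dt=0$ and estimating the commutator terms $\int u\cdot\nabla(u_m-u)\cdot u$ and $\int u\cdot\nabla u_m\cdot(u_m-u)$ with exactly the Gagliardo--Nirenberg/H\"older bookkeeping you invoke elsewhere. The double approximation also sidesteps the question of whether $u_\ep$ itself is admissible as a test function for the $r$-Laplacian system when $r$ is small; testing with $(u_m)_\ep$ avoids this. The estimates you need are the ones you already wrote down, but the identity you call free is the theorem's crux.

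Second, in case (i) you defer the exponent verification (``algebraically delicate but conceptually routine'') and propose a three-space interpolation among $L^\infty(L^2)$, $L^r(L^{3r/(3-r)})$ and $L^p(L^{3q/(3-q)})$. The paper's case (i) does \emph{not} use $L^\infty(L^2)$ at all: it applies the single Gagliardo--Nirenberg inequality
\begin{equation*}
\|u\|_{\tilde q}\leq \|u\|_{q^*}^{\theta}\,\|\nabla u\|_{r}^{1-\theta}\,,\qquad q^*=\frac{3q}{3-q}\,,\quad \theta=\frac{5qr-3r-6q}{6(r-q)}\,,
\end{equation*}
followed by $\|u\|_{q^*}\leq C\|\nabla u\|_q$ and H\"older in time against $\|\nabla u\|_{L^r(L^r)}$ (the $r$-energy bound); this two-space balance is what forces $p=\frac{q(5r-9)}{3r+2q-9}$ and is the source of the $r$-dependence. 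With your three-space family the weights are underdetermined and you have not verified that the stated $p$ is attained, nor that the weights lie in $[0,1]$ precisely on the ranges $q>\frac{9-3r}{2}$, $q\geq\frac{3r}{5r-6}$ (strict when $r\geq3$) that generate the three subcases. Your case (ii) matches the paper (interpolation of $\|u\|_2$ against $\|\nabla u\|_q$, no $r$), except that routing through the Sobolev exponent $3q/(3-q)$ breaks down at $q=3$; the paper's direct Gagliardo--Nirenberg form $\|u\|_{\tilde q}\leq\|u\|_2^{\theta}\|\nabla u\|_q^{1-\theta}$ handles all $q>\frac95$ uniformly, which is exactly how it improves the Berselli--Chiodaroli exponent for $q\geq3$.
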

\begin{remark}\label{rem:MAIN}
It is worth noting that the results stated in the four i-items in Theorem \ref{thm:BC} glue each other on the values reached by the parameters $r$ and $q$ at the extreme points of their intervals of definition (obvious details concerning the positions of the symbols $<,\,\leq,\,>,\,$ and $\,\geq\,,$ are left to the reader).  For $r=2$ the results in (i)$_1$ and  (i)$_2$ glue perfectly and, similarly, the results in  (i)$_2\,$ and (i)$_3\,$ glue for the value $r=3\,.$ Further, for $\,q=\f95\,,$ the result in item (ii) glue to the results in the three (i) items. Note that, for  $\,q=\f95\,,$
$$
\f{q(5r-9)}{3r+2q-9}=\,=\f{5q}{5q-6}\,,
$$
independently of $r$. Concerning $r$, recall also the above remark \ref{rem:rego}.
\end{remark}
\begin{remark}\label{rem:G-N}
By appealing in our proofs to Gagliardo-Nirenberg's inequalities we unify the treatment of all cases $q>\,\f95 \,$, and also improve the result obtained in reference \cite{BC}, in the case $q>\,3$. In particular, the apparently singular case $\,q=\,3\,,$ is treated inside the generic proof. For $r=2$, our case (i)$_1$ coincides with case (j) in \cite{BC} theorem \ref{bers-chio}, and our case (ii) coincides with case (jj) for $\f95<\,q<\,3\,,$ and improves case (jjj) for $q>3$ we since
$$
\f{5 q}{5q-\,6}<\,1+\,\f{2}{q}\,.
$$
\end{remark}
\begin{remark}
In the above theorem the ``best exponent'' is $q=\f95$, at the light of the corresponding SH measure, equal to $\f{10}{9}\,.$ Moreover, $p=3$ is independent of $r$. Actually, when $q\geq\f95$, the exponent $p$ is independent of $r$.  This is reasonable since we only use the estimate $u\in L^{\infty}(0,T;H)$ in the proof of the case $q\geq\f95$, and this estimate is independent of $r$. On the contrary, in case (i) we appeal to the $r-$ weak estimate for weak solutions, which shows that $\,\|\n u\|_{r,r}\,$ is bounded.
\end{remark}

\vspace{0.2cm}

Next we state our second main result. This result is independent of $r$ since our proof do not use the  $r-$weak estimate.%
\begin{theorem}\label{thm:BV}
Let $u_0\in H$ and let $u$ be a Leray-Hopf weak solution of \eqref{eq:non-Newtonian system} in a smooth bounded domain $\Omega$.
Assume that
\begin{equation}\label{eq:shcond}
u\in L^{p_1}(0,T;L^{q_1})\cap L^{\f{9q}{8q-9}}(0,T;W^{1,q})\,,
\end{equation}
where
\begin{equation}\label{eq:pumqum}
\frac{2}{p_1}+\,\frac{2}{q_1}=\,\frac{10}{9}\,,
\end{equation}
and the exponents $\,q$ and $ q_1\,$ satisfy one of the following assumptions:\\
\begin{description}
 \item[(i)] $\f98<q\leq\f95$ and $q_1 \geq\,\f{2 q}{q-\,1};$
 \item[(ii)] $ q>\f95$ and $\f95<\,q_1\leq\,\f{2q}{q-\,1}\,.$
\end{description}
Then $u$ satisfies the energy equality
\begin{equation*}
\|u(t)\|_2^2+2\int_0^t\|D(u)(\tau)\|^r_rd\tau=\|u_0\|_2^2\,.
\end{equation*}
In particular both spaces appearing in equation \eqref{eq:shcond} have SH measure $\,\f{10}{9}\,.$ Hence the intersection space has the same SH measure $\,\f{10}{9}\,$. So, the SH strength of all the above results coincide.
Further, for $q=\,\f95\,$, the above intersection coincides with the second space $\,L^{\f{9q}{8q-9}}(0,T;W^{1,q})$.
\end{theorem}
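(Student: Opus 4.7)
The plan is to mimic the mollification strategy of Shinbrot-type energy-equality proofs, exploiting that every space encountered along the way carries the same SH measure $\tfrac{10}{9}$. Set $u^\e := \rho_\e *_t u$, where $\rho_\e$ is an even mollifier in the time variable ($u$ extended by reflection across $t=0,T$). Since time mollification commutes with spatial derivatives, $u^\e$ is smooth in $t$, divergence-free in $x$, and vanishes on $\p\Om$, so it is admissible as a test function in the weak formulation of \eqref{eq:non-Newtonian system}. Inserting it and passing to the limit $\e\to 0$ reduces the energy identity to
\begin{equation*}
\tfrac12\bigl(\|u(t)\|_2^2 - \|u_0\|_2^2\bigr) + \int_0^t \|D(u)(\ta)\|_r^r\,d\ta + \lim_{\e\to0}\!\int_0^t\!\!\int_\Om (u\cdot\n u)\cdot u^\e\,dx\,d\ta = 0,
\end{equation*}
since $u$ is weakly $L^2$-continuous in time, $D(u^\e)\to D(u)$ strongly in $L^r(L^r)$, and $|D(u)|^{r-2}D(u)\in L^{r'}(L^{r'})$ (using $(r-1)r'=r$). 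Neither of these limits invokes \eqref{eq:shcond}, consistent with the theorem's assertion of $r$-independence. Everything thus reduces to showing the convective remainder $I^\e$ vanishes as $\e\to 0$.

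Because $\n\cdot u = 0$ and $u\in L^3_{\mathrm{loc}}(\Om\times(0,T))$ (by Sobolev applied to $u\in L^{9q/(8q-9)}(0,T;W^{1,q})$), the trilinear form $b(f,g,h) = \int_\Om(f\cdot\n g)\cdot h\,dx$ satisfies $b(u,u,u) = 0$ a.e.\ via $\int u\cdot\n(\tfrac12|u|^2) = 0$. Hence $I^\e = \int_0^t b(u,u,u^\e - u)\,d\ta$, and H\"older in space followed by H\"older in time gives
\begin{equation*}
|I^\e| \le \|u\|_{L^\al(L^a)}\,\|\n u\|_{L^{p_2}(L^q)}\,\|u^\e - u\|_{L^{p_1}(L^{q_1})},
\end{equation*}
where $p_2 = \tfrac{9q}{8q-9}$ and the exponents $(\al,a)$ are forced by $\tfrac1a + \tfrac1q + \tfrac1{q_1} = 1$ and $\tfrac1\al + \tfrac1{p_2} + \tfrac1{p_1} = 1$. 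A direct substitution using \eqref{eq:pumqum} gives $\tfrac2\al + \tfrac2a = \tfrac{10}{9}$, so the target space for $u$ inherits precisely the SH measure $\tfrac{10}{9}$ of the hypothesis.

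It remains to place $u$ in $L^\al(L^a)$. Sobolev embedding $W^{1,q}\hookrightarrow L^{q^*}$, with $q^* = \tfrac{3q}{3-q}$ for $q<3$ (and the analogous embedding for $q\ge 3$), upgrades \eqref{eq:shcond} to $u\in L^{p_2}(L^{q^*})$, and a short computation gives $\tfrac{2}{p_2} + \tfrac{2}{q^*} = \tfrac{10}{9}$ as well. Interpolation (bilinear H\"older) between $L^{p_1}(L^{q_1})$ and $L^{p_2}(L^{q^*})$ then supplies $u$ in every $L^{\al}(L^{a})$ whose reciprocal-exponent point lies on the segment joining the two endpoints. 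Cases (i) and (ii) correspond to the two possible orderings: in (i) one has $q_1 \ge \tfrac{2q}{q-1} \ge q^*$, while in (ii) one has $q_1 \le \tfrac{2q}{q-1} \le q^*$, and in either case the needed $(\al,a)$ falls between the endpoints — the strict bounds $q > \tfrac98$ and $q_1 > \tfrac95$ keep it interior and all exponents finite. Combined with $\|u^\e - u\|_{L^{p_1}(L^{q_1})}\to 0$ from density of smooth functions, this yields $|I^\e|\to 0$, hence the energy equality. The main technical obstacle will be the exponent bookkeeping needed to verify this for every admissible $(q,q_1)$ in each range, and to treat the boundary $q=\tfrac95$ cleanly, where the two endpoints coincide and the argument collapses to the purely gradient-based proof of Theorem \ref{thm:BC}.
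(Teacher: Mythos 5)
Your overall architecture (time-mollification, reduction to the convective remainder, and the vanishing of the trilinear term $b(u,u,u)$) matches the paper's, but the key H\"older splitting
$|I^\e|\le \|u\|_{L^{\alpha}(L^{a})}\,\|\nabla u\|_{L^{p_2}(L^{q})}\,\|u^\e-u\|_{L^{p_1}(L^{q_1})}$
does not close, and this is a genuine gap rather than a bookkeeping detail. Two concrete failures. First, in case (ii) the forced spatial exponent $\frac{1}{a}=1-\frac1q-\frac{1}{q_1}$ can be negative: take $q=q_1=1.9$ (admissible, since $\frac95<1.9\le\frac{2q}{q-1}\approx 4.22$), for which $\frac1q+\frac1{q_1}\approx 1.05>1$, so there is no Lebesgue exponent $a$ making the spatial H\"older inequality valid. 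Second, in case (i) the required space $L^{\alpha}(L^{a})$ generally lies \emph{outside} the interpolation segment joining $L^{p_1}(L^{q_1})$ and $L^{p_2}(L^{q^*})$: your claim that ``the needed $(\alpha,a)$ falls between the endpoints'' amounts to $\frac1a\le\frac1{q^*}$, i.e. $\frac1{q_1}\ge\frac43-\frac2q$, which fails whenever $q>\frac32$ and $q_1$ is large (e.g. $q=1.7$, $q_1=100$, which satisfies $q_1\ge\frac{2q}{q-1}\approx 4.86$). In that regime one has $a<q^*$ but $\alpha>p_2$, so the bounded domain gives only $u\in L^{p_2}(L^{a})$, not $L^{\alpha}(L^{a})$, and no combination of the two hypothesis spaces produces the missing time integrability. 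The root cause is that you pinned two of the three H\"older factors to the hypothesis spaces exactly and let the third be forced.

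The paper avoids both problems by a different splitting: using the antisymmetry of the trilinear form (justified through the smooth approximants $u_m$), the \emph{difference} term is placed in the gradient slot, giving $\|u\|_{\tilde q}^{2}\,\|\nabla(u_m-u)\|_{q}$ with $\tilde q=\frac{2q}{q-1}$, so that $\frac{2}{\tilde q}+\frac1q=1$ holds identically and no exponent can degenerate. The remaining factor $\|u\|_{\tilde q}^{2}$ is then split \emph{multiplicatively}, $\|u\|_{\tilde q}\le\|u\|_{q^*}^{\theta}\|u\|_{q_1}^{1-\theta}$ in case (i) (with $\|u\|_{q^*}\le C\|\nabla u\|_q$) and via Gagliardo--Nirenberg $\|u\|_{\tilde q}\le\|u\|_{q_1}^{\theta}\|\nabla u\|_q^{1-\theta}$ in case (ii), followed by a three-exponent H\"older in time; the hypotheses (i) and (ii) are exactly the conditions ensuring $\theta\in[0,1]$, i.e. that $\tilde q$ (not your $a$) lies between $q_1$ and $q^*$. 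If you want to salvage your write-up, move $u^\e-u$ into the $L^{p}(0,T;W^{1,q})$ norm and interpolate the two $u$-factors as above; as written, the estimate fails on open subsets of both parameter ranges. Separately, your justification of $b(u,u,u)=0$ from ``$u\in L^3_{loc}$'' is too glib: what is actually needed is $|u|^2|\nabla u|\in L^1$ for a.e. $t$ together with a density argument in $W^{1,q}_{0,\sigma}\cap L^{2q/(q-1)}$, which is precisely what the paper's approximation by $u_m$ supplies.
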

Concerning the second-last statement in the theorem, note that $\,W^{1,q} \subset L^{q^*}\,$ with $q^*=\f{3q}{3-q}\,(q<3)$ and that
$$
\f{2}{\f{9q}{8q-9}}+\,\f{2}{q^*}=\,\f{10}{9}\,,
$$
independently of $q$. So, as stated above, both spaces considered in equation \eqref{eq:shcond} have the maximum Shinbrot measure $\,\frac{10}{9}\,.$ Hence the intersection space has the same maximum SH measure, independently of the value of $q$.\par%
A quite significant point is that for $\,q=\,\frac95\,$ one has
$$
L^{\frac{9\, q}{8 q -9}} (0,\,T; W^{1,\,q})\subset\,L^{\frac{18\, q}{9+q}} (0,\,T; L^{\frac{2q}{q- 1}})\,.
$$
So the intersection space in \eqref{eq:shcond} coincides with the space $\, L^{\f{9q}{8q-9}}(0,T;W^{1,q})\,$. In other words, for $\,q=\,\frac95\,$ our condition coincides with the best condition obtained in theorem \ref{bers-chio}, namely
$$
u \in L^3 (0,\,T; W^{1,\,\frac{9}{5}})\,,
$$
implies the energy equality.%

\vspace{0.2cm}

Open problem: To show that if
$$
u \in  L^{\frac{3 r}{ 2 r-\,3}} (0,\,T; W^{\frac13,\,r})\,,
$$
where $\,r \in (\frac32 ,\,9)\,,$ then $\,u\,$ enjoys the energy equality.

\begin{remark}
Note that for $q=\f95$, the results in items (i) and (ii) glue perfectly.
\end{remark}
Clearly, the Remark \ref{rem:G-N} also applies here.
\section{Sketch of the Proofs}
We follow some of the main lines of the proofs shown in reference \cite{BC}, to which we are strongly indebted. See also the related proof of Theorem 4.1, in reference \cite{galdi-2000}.
\begin{proof}[Proof of Theorem \ref{thm:BC}]
Define the typical mollification operator $(\cdot)_{\epsilon}$
\[
(\Phi)_{\epsilon}=\int_0^tk_{\epsilon}(t-\tau)\Phi(\tau)d\tau\,,
\]
where the infinitely differentiable, real-valued, nonnegative, even function $k_{\epsilon}$ , has support in $(-\e,\e)$, and integral equal to one.
Next, let $\{u_m\}$ be a sequence in $C^{\infty}(0,\infty;C^{\infty}_0(\Omega))$ converging to $u$ in
\[
L^r(0,T;V_r)\cap L^{p}(0,T;W^{1,q})\,,
\]
where $V_r$ is denoted by the completion of $\mathcal{V}$ in $W^{1,r}(\Omega)$.
As in \cite{BC}, one has
\begin{equation*}
\begin{split}
(u(t_0),(u_m)_{\e}(t_0))=&(u_0,(u_m)_{\e}(0))\\
&+\int_0^{t_0}\left[\left(u,\f{\p(u_m)_{\e}}{\p t}\right)-(u\cdot\n u,(u_m)_{\e})-\left(|D(u)|^{r-2}D(u),D\left(\left(u_m\right)_{\e}\right)\right)\right]dt\,.
\end{split}
\end{equation*}
The main point is to single out sufficient conditions to guarantee the existence of a suitable limit, as $m\to\infty$. The real obstacle is the non-linear term. We write the nonlinear term as
\begin{equation*}
\begin{split}
\int_0^{t_0}(u\cdot\n u,(u_m)_{\e})dt=&\int_0^{t_0}(u\cdot\n u,(u_m)_{\e}-(u)_{\e})dt\\
&+\int_0^{t_0}(u\cdot\n u,(u)_{\e}-u)dt+\int_0^{t_0}(u\cdot\n u,u)dt\,.
\end{split}
\end{equation*}
We start by proving that
\begin{equation}\label{eq:non=0}
\int_0^{t_0}(u\cdot\n u,u)dt=0\,.
\end{equation}
First, integrating by parts we get
\begin{equation*}
\int_0^{t_0}(u\cdot\n u_m,u_m)dt=0\,.
\end{equation*}
Let us consider
\begin{equation*}
\begin{split}
&\left|\int_0^{t_0}(u\cdot\n u_m,u_m)dt-\int_0^{t_0}(u\cdot\n u,u)dt\right|\\
&\leq \left|\int_0^{t_0}(u\cdot\n u_m,(u_m-u))dt\right|+\left|\int_0^{t_0}(u\cdot\n (u_m-u),u)dt\right|\,.
\end{split}
\end{equation*}
and show the convergence of the two terms on the right hand side to zero in the two different cases, which implies \eqref{eq:non=0}. In the following, we set $q^*=\f{3q}{3-q}$, $q'=\f{q}{q-1}$ and $\tilde{q}=2q'$.

(i) By Gagliardo-Nirenberg inequalities, E. Gagliardo \cite{gagliardo}, L. Nirenberg \cite{nirenberg} (for a more recent reference see for example \cite{Galdi} Lemma II.3.3) we have
\begin{equation*}
\|u\|_{\tilde{q}}\leq \|u\|^{\tha}_{q^*}\|\n u\|^{1-\tha}_r\,,
\end{equation*}
where
\begin{equation*}
\f{1}{\tilde{q}}=(1-\tha)\left(\f1{r}-\f13\right)+\f{\tha}{q^*}\,,
\end{equation*}
which gives $\tha:=\f{5qr-3r-6q}{6(r-q)}$. To guarantee $0\leq \tha\leq1$, we need restrict $\f{3r}{5r-6}\leq q\leq\f95$ . Note that when $r\geq3$, from the second exceptional case of \cite{Galdi} Lemma II.3.3, $\tha$ can not equal to $0$, i.e. $q$ can not equal to $\f{3r}{5r-6}$.
Now, we can estimate, by Sobolev embeddings, the integral as follows
\begin{equation*}
\begin{split}
\left|\int_0^{t_0}(u\cdot\n(u_m-u),u)dt\right|\leq&\left|\int_0^{t_0}\|u\|_{\tilde{q}}\|\n(u_m-u)\|_q\|u\|_{\tilde{q}}dt\right| \\
\leq&\left|\int_0^{t_0}\|u\|^{2\tha}_{q^*}\|\n u\|^{2(1-\tha)}_{r}\|\n(u_m-u)\|_qdt\right|\\
\leq&\left|\int_0^{t_0}\|\n u\|^{2\tha}_{q}\|\n u\|^{2(1-\tha)}_{r}\|\n(u_m-u)\|_qdt\right|\\
\leq&\|\n u\|^{2\tha}_{p,q}\|\n u\|^{2(1-\tha)}_{r,r}\|\n(u_m-u)\|_{p,q}\,,
\end{split}
\end{equation*}
where we have applied H\"{o}lder inequality in the time variable with exponents $x\,,y\,,z$ such that
\begin{equation*}
2(1-\tha)x=r,\quad2\tha y=p,\quad z=p\,, \quad \f{1}{x}+\f{1}{y}+\f{1}{z}=1\,,
\end{equation*}
which gives $p=\f{q(5r-9)}{3r+2q-9}$. To guarantee that the denominator of $p$ has a positive value, we need restrict $q>\f{9-3r}{2}$.
This shows that $\left|\int_0^{t_0}(u\cdot\n (u_m-u),u)dt\right|\to0$.
Similarly, we have
\begin{equation*}
\begin{split}
&\left|\int_0^{t_0}(u\cdot\n u_m,(u_m-u))dt\right|\\
&\leq\left|\int_0^{t_0}\|u\|_{\tilde{q}}\|\n u_m\|_q\|u_m-u\|_{\tilde{q}}dt\right| \\
&\leq\left|\int_0^{t_0}\|u\|^{\tha}_{q^*}\|\n u\|^{(1-\tha)}_{r}\|\n u_m\|_q\|u_m-u\|^{\tha}_{q^*}\|\n(u_m-u)\|^{(1-\tha)}_{r}dt\right|\\
&\leq\left|\int_0^{t_0}\|\n u\|^{\tha}_{q}\|\n u\|^{(1-\tha)}_{r}\|\n u_m\|_q\|\n(u_m-u)\|^{\tha}_{q}\|\n(u_m-u)\|^{(1-\tha)}_{r}dt\right|\\
&\leq\|\n u\|^{\tha}_{p,q}\|\n u\|^{(1-\tha)}_{r,r}\|\n(u_m-u)\|_{p,q}\|\n(u_m-u)\|^{\tha}_{p,q}\|\n(u_m-u)\|^{(1-\tha)}_{r,r}\,.
\end{split}
\end{equation*}
This implies that $\left|\int_0^{t_0}(u\cdot\n (u_m-u),u)dt\right|\to0$.\par%
From the above proof, in the three (i) cases we must restrict the values of $\,q\,$ simultaneously to values $q\geq\f{3r}{5r-6}$ ( $q>\f{3r}{5r-6}$, when $r\geq3$), and $q>\f{9-3r}{2}$. These restrictions hold in all cases since:
$\,\f{3r}{5r-6}\leq\f{9-3r}{2}\,,\, \textrm{when}\, \f95<r\leq2\,;\quad$%
$\,\f{3r}{5r-6}>\f{9-3r}{2}\,, \textrm{when} \, 2<r<3\,.\quad$%
And $\,\f{9-3r}{2}<\f{3r}{5r-6}\leq1 \,, \textrm{when} \, r\geq3\,.$

\vspace{0.2cm}

(ii) In case $q>\f95$ we have $\n u\in L^p(0,T;L^q(\Omega))$ with $p=\f{5q}{5q-6}$.  By Gagliardo-Nirenberg inequalities again, we have
\begin{equation*}
\|u\|_{\tilde{q}}\leq \|u\|^{\tha}_2\|\n u\|^{1-\tha}_q\,,
\end{equation*}
where
\[
\f{1}{\tilde{q}}=(1-\tha)\left(\f1{q}-\f13\right)+\f{\tha}{2}\,.
\]
Hence $\tha=\f{5q-9}{5q-6}$. Now, we can estimate the integral as follows
\begin{equation*}
\begin{split}
\left|\int_0^{t_0}(u\cdot\n(u_m-u),u)dt\right|\leq&\left|\int_0^{t_0}\|u\|_{\tilde{q}}\|\n(u_m-u)\|_q\|u\|_{\tilde{q}}dt\right| \\
\leq&\left|\int_0^{t_0}\|u\|^{2\tha}_{2}\|\n u\|^{2(1-\tha)}_{q}\|\n(u_m-u)\|_qdt\right|\\
\leq&\|u\|^{2\tha}_{\infty,2}\|\n u\|^{2(1-\tha)}_{p,q}\|\n(u_m-u)\|_{p,q}\,,
\end{split}
\end{equation*}
where we have applied H\"{o}lder inequality in the time variable with exponents $\gamma_1\,,\gamma_2$ such that
\begin{equation*}
2(1-\tha)\gamma_1=p\,,\quad \gamma_2=p\,, \quad \f{1}{\gamma_1}+\f{1}{\gamma_2}=1\,,
\end{equation*}
which gives $p=\f{5q}{5q-6}$. This shows that $\left|\int_0^{t_0}(u\cdot\n (u_m-u),u)dt\right|\to0$. Similarly, we can conclude that
$\left|\int_0^{t_0}(u\cdot\n (u_m-u),u)dt\right|\to0$.

Hence, we have
\begin{equation*}
0=\int_0^{t_0}(u\cdot\n u_m,u_m)dt\to \int_0^{t_0}(u\cdot\n u,u)dt\,.
\end{equation*}
Going through the same lines of the previous estimates, we can bound the terms $\int_0^{t_0}(u\cdot\n u,(u_m)_{\e}-(u)_{\e})dt$ and
$\int_0^{t_0}(u\cdot\n u,(u)_{\e}-u)dt$ by some finite quantity, times appropriate norms of $((u_m)_{\e}-(u)_{\e})$ and $((u)_{\e}-u)$
respectively, which converge to zero. All of this shows that
\[
\int_0^{t_0}(u\cdot\n u,\n(u_m)_{\e})dt\to0\,.
\]
The integral $\int_0^{t_0}(u,\p_t(u_m)_{\e})dt$ vanishes since $k_{\e}$ is even. By appealing to the properties of mollifiers,
\begin{equation*}
\int_0^{t_0}\left(|D(u)|^{r-2}D(u),D\left(\left(u_m\right)_{\e}\right)\right)dt\to \int_0^{t_0}\int_{\Omega}|D(u)(t)|^rdxdt\,.
\end{equation*}
Hence, in the limits, one has
\[
\|u(t_0)\|_2^2+2\int_0^{t_0}\int_{\Omega}|D(u)(t)|^rdxdt=\|u_0\|_2^2\,,
\]
for all $t_0$.
\end{proof}

\begin{proof}[Proof of Theorem \ref{thm:BV}]
In the following we always assume $p=\f{9q}{8q-9}$. We state by proving $\left|\int_0^{t_0}(u\cdot\n u_m,(u_m-u))dt\right|\to0$ and $\left|\int_0^{t_0}(u\cdot\n (u_m-u),u)dt\right|\to0$, as $m\to\infty$, where $\{u_m\}$ is a sequence in $C^{\infty}(0,\infty;C^{\infty}_0(\Omega))$ converging to $u$ in
\[
L^r(0,T;V_r)\cap L^{p_1}(0,T;L^{q_1})\cap L^{p}(0,T;W^{1,q})\,.
\]

(i) In case $\f98<q\leq\f95$ we have $u\in L^{p_1}(0,T;L^{q_1})$ with $\f{2}{p_1}+\f{2}{q_1}=\f{10}{9}$, and $q_1\geq 2q'$. We can interpolate and write $\tilde{q}$ as
\begin{equation}\label{eq:1}
\f{1}{\tilde{q}}=\f{\tha}{q^*}+\f{1-\tha}{q_1}\,.
\end{equation}
Now, we can estimate the integral as follows
\begin{equation*}
\begin{split}
\left|\int_0^{t_0}(u\cdot\n(u_m-u),u)dt\right|\leq&\left|\int_0^{t_0}\|u\|_{\tilde{q}}\|\n(u_m-u)\|_q\|u\|_{\tilde{q}}dt\right| \\
\leq&\left|\int_0^{t_0}\|u\|^{2\tha}_{q^*}\|u\|^{2(1-\tha)}_{q_1}\|\n(u_m-u)\|_qdt\right|\\
\leq&\|\n u\|^{2\tha}_{p,q}\|u\|^{2(1-\tha)}_{p_1,q_1}\|\n(u_m-u)\|_{p,q}\,,
\end{split}
\end{equation*}
where we have applied H\"{o}lder inequality in the time variable with exponents $x\,,y\,,z$ such that
\begin{equation*}
2(1-\tha)x=p_1,\quad2\tha y=p,\quad z=p\,, \quad \f{1}{x}+\f{1}{y}+\f{1}{z}=1\,,
\end{equation*}
which gives
\begin{equation}\label{eq:2}
\f{2(1-\tha)}{p_1}+\f{2\tha}{p}=1-\f{1}{p}\,.
\end{equation}
From \eqref{eq:1}, one has
\begin{equation}\label{eq:3}
\f{2(1-\tha)}{q_1}+\f{2\tha}{q^*}=1-\f{1}{q}\,.
\end{equation}
We deduce from \eqref{eq:2} and \eqref{eq:3} that
\begin{equation*}
(1-\tha)\left(\f{2}{p_1}+\f{2}{q_1}\right)=2-\f{1}{p}-\f{1}{q}-\tha\left(\f{2}{q^*}+\f{2}{p}\right)=\f{10}{q}(1-\tha)\,,
\end{equation*}
which implies $\f{2}{p_1}+\f{2}{q_1}=\f{10}{9}$. Note that, when $\tha=1$, i.e. $q=\f95$, it is not necessary to impose the condition $u\in L^{p_1}(0,T;L^{q_1})$. This is reasonable due to Theorem \ref{thm:BC}.
This shows that $\left|\int_0^{t_0}(u\cdot\n (u_m-u),u)dt\right|\to0$. Similarly, we have $\left|\int_0^{t_0}(u\cdot\n (u_m-u),u)dt\right|\to0$.

(ii) In case $q>\f95$ we have $\n u\in L^{p_1}(0,T;L^{q_1})$ with $\f{2}{p_1}+\f{2}{q_1}=\f{10}{9}$, and $\f95<q_1\leq2q'$. Here we assume $q_1>\f95$ to make the denominator of $p_1=\f{9q_1}{5q_1-9}$ be a positive value. By Gagliardo-Nirenberg inequalities, we have
\begin{equation*}
\|u\|_{\tilde{q}}\leq \|u\|^{\tha}_{q_1}\|\n u\|^{1-\tha}_q\,,
\end{equation*}
where
\begin{equation}\label{eq:21}
\f{1}{\tilde{q}}=(1-\tha)\left(\f1{q}-\f13\right)+\f{\tha}{q_1}\,.
\end{equation}
Now we can estimate, by Sobolev embeddings, the integral as follows
\begin{equation*}
\begin{split}
\left|\int_0^{t_0}(u\cdot\n(u_m-u),u)dt\right|\leq&\left|\int_0^{t_0}\|u\|_{\tilde{q}}\|\n(u_m-u)\|_q\|u\|_{\tilde{q}}dt\right| \\
\leq&\left|\int_0^{t_0}\|u\|^{2\tha}_{2}\|\n u\|^{2(1-\tha)}_{q}\|\n(u_m-u)\|_qdt\right|\\
\leq&\|u\|^{2\tha}_{p_1,q_1}\|\n u\|^{2(1-\tha)}_{p,q}\|\n(u_m-u)\|_{p,q}\,,
\end{split}
\end{equation*}
where we have applied H\"{o}lder inequality in the time variable with exponents $x\,,y\,,z$ such that
\begin{equation*}
2(1-\tha)x=p,\quad2\tha y=p_1,\quad z=p\,, \quad \f{1}{x}+\f{1}{y}+\f{1}{z}=1\,,
\end{equation*}
which gives
\begin{equation}\label{eq:22}
\f{2(1-\tha)}{p}+\f{2\tha}{p_1}=1-\f{1}{p}\,.
\end{equation}
From \eqref{eq:21}, one has
\begin{equation}\label{eq:23}
2(1-\tha)\left(\f{1}{q}-\f13\right)+\f{2\tha}{q_1}=1-\f{1}{q}\,.
\end{equation}
We deduce from \eqref{eq:22} and \eqref{eq:23} that
\begin{equation}\label{}
\tha\left(\f{2}{p_1}+\f{2}{q_1}\right)=2-\f{1}{p}-\f{1}{q}-(1-\tha)\left(\f{2}{p}+\f{2}{q}-\f23\right)=\f{10}{q}\tha\,,
\end{equation}
which gives $\f{2}{p_1}+\f{2}{q_1}=\f{10}{9}$.
This shows that $\left|\int_0^{t_0}(u\cdot\n (u_m-u),u)dt\right|\to0$. Similarly, we have $\left|\int_0^{t_0}(u\cdot\n (u_m-u),u)dt\right|\to0$.

Hence, we have
\begin{equation*}
0=\int_0^{t_0}(u\cdot\n u_m,u_m)dt\to \int_0^{t_0}(u\cdot\n u,u)dt\,.
\end{equation*}

The remaining procedure is same as the proof of Theorem \ref{thm:BC}.
\end{proof}

\end{document}